\newtheorem{theorem}{Theorem}
\newtheorem{lemma}{Lemma}
\newtheorem{definition}{Definition}
\title{$s$-Catalan numbers and Littlewood-Richardson polynomials}
\author{William Linz\thanks{Department of Mathematical Sciences, University of Illinois at Urbana-Champaign, IL, USA. Email: \texttt{wlinz2@illinois.edu}. Partially supported by NSF RTG Grant DMS-1937241.}}
\date{\today}
\begin{document}
\maketitle

\begin{abstract}
In this note, we study two generalizations of the Catalan numbers, namely the $s$-Catalan numbers and the spin $s$-Catalan numbers. These numbers first appeared in relation to quantum physics problems about spin multiplicities. We give a combinatorial description for these numbers in terms of Littlewood-Richardson coefficients, and explain some of the properties they exhibit in terms of Littlewood-Richardson polynomials.
\end{abstract}

The purpose of this note is to study two generalizations of the well-known Catalan numbers and prove some basic combinatorial properties of these numbers by characterizing them as Littlewood-Richardson coefficients of certain triples of partitions. These numbers first appeared in quantum physics problems about spin multiplicities~\cite{CHI, CKZ}, and have also been studied in special cases by Belbachir and Iguerofa ~\cite{BI2}. 

We begin with the \emph{$s$-binomial coefficients}, a generalization of the ordinary binomial coefficients. 

\begin{definition}[$s$-binomial coefficients]\label{defn:sbin}
For positive integers $n$ and $s$, and integer $k$ with $0\le k\le sn$, the $s$-binomial coefficient $\binom{n}{k}_s$ is the coefficient of $x^k$ in the expansion of $(1+x+x^2+\cdots +x^s)^n$:
\[(1+x+x^2+\cdots +x^s)^n = \sum_{k=0}^{sn}\binom{n}{k}_s x^{k}.\]
\end{definition}

The $s$-binomial coefficients were apparently first defined by De Moivre~\cite{DM}, and have since been studied by a number of mathematicians, including Euler \cite{E} in special cases as part of his investigations on generating series, which are now a basic tool in combinatorics.  Belbachir and Igueroufa~\cite{BI1} give an overview of the history of the $s$-binomial coefficients\footnote{Belbachir and Iguerofa refer to the $s$-binomial coefficients as \emph{bi$^{s}$nomial coefficients}.} and their basic properties, including expansions of the $s$-binomial coefficients in terms of the ordinary binomial coefficients. 

Andr\'e~\cite{A} was apparently the first to study $s$-binomial coefficients combinatorially for general $s$. Freund~\cite{F} independently considered $s$-binomial coefficients combinatorially, in terms of restricted occupancy problems, and we recall his combinatorial description (which is basically equivalent to the definition given in Definition~\ref{defn:sbin}). 

\begin{lemma}[Freund]\label{lem:fdlemma}
The $s$-binomial coefficients $\binom{n}{k}_s$ count the number of ways of placing $k$ objects in $n$ boxes, provided each box contains at most $s$ objects. 
\end{lemma}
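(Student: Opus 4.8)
The plan is to prove the identity by matching the monomials in the expansion of $(1+x+x^2+\cdots+x^s)^n$ directly with the valid placements, so that extracting the coefficient of $x^k$ becomes a counting statement. The key observation is that the single polynomial factor $1+x+x^2+\cdots+x^s$ is itself the generating function for one box: its coefficient of $x^j$ equals $1$ when $0\le j\le s$ and $0$ otherwise, recording that there is exactly one way to place $j$ indistinguishable objects in a single box when $j\le s$, and no way when $j>s$.

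First I would write the product with an explicit index on each factor as $\prod_{i=1}^{n}(1+x+\cdots+x^s)$ and expand it by the distributive law. A single term of the expansion is obtained by selecting one monomial $x^{k_i}$ from the $i$-th factor, subject to $0\le k_i\le s$, and its contribution is $x^{k_1+\cdots+k_n}$. Collecting like powers, the coefficient of $x^k$ counts exactly the tuples $(k_1,\ldots,k_n)$ of integers with $0\le k_i\le s$ for every $i$ and $\sum_{i=1}^{n}k_i=k$.

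Next I would exhibit the bijection between such tuples and the placements in the statement: to a tuple $(k_1,\ldots,k_n)$ associate the placement that puts $k_i$ objects into box $i$. The per-factor bound $k_i\le s$ is precisely the constraint that each box holds at most $s$ objects, the condition $\sum k_i=k$ is the requirement that $k$ objects are placed in total, and since the objects are indistinguishable a placement is determined by its occupancy vector alone. This map is manifestly a bijection, so the coefficient of $x^k$, which is $\binom{n}{k}_s$ by Definition~\ref{defn:sbin}, equals the number of admissible placements.

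The argument is essentially routine, and I do not expect a genuine obstacle; the only point requiring care is the bookkeeping of the indistinguishability convention, namely that a placement is recorded by the number of objects in each box rather than by any labeling of the objects, so that each admissible occupancy vector is counted exactly once. With that convention fixed, the correspondence between monomials and placements is exact and no residual combinatorial subtlety remains.
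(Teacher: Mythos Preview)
Your argument is correct and is exactly the standard generating-function bijection: expand $\prod_{i=1}^{n}(1+x+\cdots+x^s)$ term by term, identify the coefficient of $x^k$ with the number of tuples $(k_1,\ldots,k_n)$ satisfying $0\le k_i\le s$ and $\sum k_i=k$, and read each tuple as an occupancy vector. The paper does not actually prove this lemma; it simply cites Freund and remarks that the combinatorial description is ``basically equivalent'' to Definition~\ref{defn:sbin}, so your write-up supplies precisely the routine verification the paper leaves implicit.
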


In analogy with the ordinary binomial coefficients, we define the \emph{central $s$-binomial coefficients} to be the coefficients $\binom{2n}{sn}_s$. We can similarly define the \emph{$s$-Catalan numbers}. 
\begin{definition}[$s$-Catalan numbers]\label{defn:scat}
Let $n$ and $s$ be positive integers. The $n$th $s$-Catalan number is the number \[\binom{2n}{sn}_s - \binom{2n}{sn+1}_s.\]
\end{definition}
We give a list of small $s$-Catalan numbers for $s=1, 2, \ldots, 7$ in Figure~\ref{fig:scattable}. 

\begin{figure}
\centering
\begin{tabular}{c|c|c|c|c|c|c|c|}
 & $s=1$ & $s=2$ & $s=3$ & $s=4$ & $s=5$ & $s=6$ & $s=7$ \\
 \hline
 $n = 1$ & 1 & 1 & 1 & 1 & 1 & 1 & 1 \\
 $n = 2$ &  2 & 3 & 4 & 5 & 6 & 7 & 8 \\
 $n = 3$ & 5 & 15 & 34 & 65 & 111 & 175 & 260 \\
 $n = 4$ & 14 & 91 & 364 & 1085 & 2666 & 5719 & 11096 \\
 $n = 5$ & 42 & 603 & 4269 & 19845 & 70146 & 204867 & 518498 \\
 $n = 6$ & 132 & 4213 & 52844 & 383251 & 1949156 & 7737807 & 25593128\\
 $n = 7$& 429 & 30537 & 679172 & 7687615 & 56267133 & 303922983 & 1312660700\\
 $n = 8$ & 1430 & 227475 & 8796188 & 158614405 & 1670963202 & 12281450455 & 69270071480\\
\end{tabular}
\caption{A list of $s$-Catalan numbers for $1\le s\le 7$ and $1\le n\le 8$.}
\label{fig:scattable}
\end{figure}

Belbachir and Iguerofa~\cite{BI2} defined the $s$-Catalan numbers only for odd positive integers $s$, but the definition makes sense for all positive integers $s$. When $s=1$, the $s$-Catalan numbers reduce to the ordinary Catalan numbers, for which there are many known combinatorial interpretations~\cite{Stan}. Belbachir and Iguerofa~\cite{BI2} ask for a combinatorial interpretation of the $s$-Catalan numbers. 

As we mentioned in the opening, motivation for studying the $s$-Catalan numbers comes from certain problems in physics. Specifically, Cohen, Hansen and Itzhaki~\cite{CHI} and Curtright, van Kortryk, and Zachos~\cite{CKZ} were interested in spin multiplicities that occur in spin $s$ $n$-fold tensor products. That is, given a Lie group $G$ (in \cite{CHI}, this Lie group is $SO(3)$; in \cite{CKZ}, it is $SU(2)$), one is interested in determining the multiplicity of the irreducible representation $[j]$ in the tensor product $[s]^{\otimes n}$, where $s$ can be any nonnegative half-integer (\textit{i.e.}, $s=t/2$ for some nonnegative integer $t$). In the case that $j=0$, the multiplicities calculated by Cohen, Hansen and Itzhaki are given by a generalization of the $s$-Catalan numbers, which we call the \emph{spin $s$-Catalan numbers}.

\begin{definition}[spin $s$-Catalan numbers]\label{s2catnum}
Let $n$ be a positive integer, and let $s$ be a half-integer (\textit{i.e.}, a positive integer divided by $2$). The $n$th spin $s$-Catalan number is the number
\[\binom{n}{sn+s}_{2s} - \binom{n}{sn+s+1}_{2s}.\]
If $n$ is even and $s$ is not an integer, then we consider the $n$th spin $s$-Catalan number to be $0$.
\end{definition}

A table of small spin $s$-Catalan numbers is given in Figure~\ref{fig:s2cattable}.

Curtright, van Kortryk, and Zachos produced a table similar to the one in Figure~\ref{fig:s2cattable}, and noted that the rows of the table are polynomials. Indeed, if we ignore the 0 entries in Figure~\ref{fig:s2cattable}, then the rows are polynomials and we will show that these rows are certain Littlewood-Richardson polynomials. 

We now give representations of the $s$-Catalan numbers and spin $s$-Catalan numbers as certain Littlewood-Richardson coefficients. As the Littlewood-Richardson coefficients have a well-known combinatorial interpretation, which we recall shortly, this also provides one answer to the question of Belbachir and Iguerofa. Let us recall the definition of the Littlewood-Richardson coefficients (see, for example, Barcelo and Ram \cite{BR}). 

\begin{figure}
\centering
\begin{tabular}{c|c|c|c|c|c|c|c|}
 & $s=\frac12$ & $s=1$ & $s=\frac32$ & $s=2$ & $s=\frac52$ & $s=3$ & $s=\frac72$ \\
 \hline
 $n = 1$ & 1 & 1 & 1 & 1 & 1 & 1 & 1 \\
 $n = 2$ &  0 & 1 & 0 & 1 & 0 & 1 & 0 \\
 $n = 3$ & 2 & 3 & 4 & 5 & 6 & 7 & 8 \\
 $n = 4$ & 0 & 6 & 0 & 16 & 0 & 31 & 0 \\
 $n = 5$ & 5 & 15 & 34 & 65 & 111 & 175 & 260 \\
 $n = 6$ & 0 & 36 & 0 & 260 & 0 & 981 & 0\\
 $n = 7$& 14 & 91 & 364 & 1085 & 2666 & 5719 & 11096\\
 $n = 8$ & 0 & 232 & 0 & 4600 & 0 & 33922 & 0\\
 $n = 9$ & 42 & 603 & 4269 & 19845 & 70146 & 204867 & 518498\\
 $n = 10$ & 0 & 1585  & 0 &  86725 & 0 &1251460 &0 \\
\end{tabular}
\caption{A list of spin $s$-Catalan numbers for $\frac12\le s\le \frac72$ and $1\le n\le 10$.}
\label{fig:s2cattable}
\end{figure}

\begin{definition}[Littlewood-Richardson coefficients]
Let $V^{\mu}$ and $V^{\nu}$ be irreducible polynomial representations of the group $GL(n)$. The Littlewood-Richardson coefficients $c^{\lambda}_{\mu, \nu}$ are the coefficients
\[V^{\mu}\otimes V^{\nu} = \sum_{\lambda}c^{\lambda}_{\mu, \nu} V^{\lambda}.\]
\end{definition}

As is well-known, the Littlewood-Richardson coefficients amazingly have a combinatorial interpretation in terms of Littlewood-Richardson tableaux (see, for instance, Fomin's appendex to Stanley's book~\cite{Fo} or the book of Manivel~\cite{M}). 

Recall that for partitions $\lambda$ and $\mu$, the skew shape $\lambda / \mu$ is the set-theoretic difference of the Young diagrams of $\lambda$ and $\mu$. If $\nu = (\nu_1, \nu_2, \ldots)$, then the shape is defined to have content $\nu$ if, in a filling $F$ of $\lambda / \mu$, there are exactly $\nu_i$ $i$s in $F$ for each $i$. A tableau on the shape $\lambda / \mu$ is semistandard if the associated filling is weakly increasing along rows and strictly increasing along columns. Finally, a word $a_1a_2\cdots a_n$ is a ballot word if, for each initial segment $a_1a_2\cdots a_j$, the number of $a$s appearing in the word is at least the number of $b$s appearing in the word whenever $a < b$. 

\begin{theorem}[Littlewood-Richardson coefficients (Combinatorial interpretation)]
For a triple of partitions $(\lambda, \mu, \nu)$, the Littlewood-Richardson coefficient $c^{\lambda}_{\mu, \nu}$ is equal to the number of semistandard skew tableaux of shape $\lambda / \mu $ and content $\nu$, and for which the associated word obtained by reading the reversed rows concatentated together is a ballot word (or Yamanouchi word, or lattice permutation). 
\end{theorem}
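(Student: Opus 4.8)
The plan is to recognize this statement as the classical Littlewood--Richardson rule and to prove it by transporting the representation-theoretic definition of $c^{\lambda}_{\mu,\nu}$ into the combinatorics of Schur functions. The starting observation is that the character of the irreducible polynomial representation $V^{\lambda}$ of $GL(n)$ is the Schur polynomial $s_{\lambda}(x_1,\dots,x_n)$; since characters are multiplicative under tensor product, the defining relation $V^{\mu}\otimes V^{\nu}=\sum_{\lambda}c^{\lambda}_{\mu,\nu}V^{\lambda}$ becomes the identity $s_{\mu}s_{\nu}=\sum_{\lambda}c^{\lambda}_{\mu,\nu}s_{\lambda}$. Thus the $c^{\lambda}_{\mu,\nu}$ are exactly the structure constants for multiplication in the ring of symmetric functions relative to the Schur basis, and the theorem becomes a purely combinatorial statement about these constants.

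First I would introduce the skew Schur functions $s_{\lambda/\mu}$, defined combinatorially as $s_{\lambda/\mu}=\sum_{T}x^{\mathrm{content}(T)}$, the sum ranging over semistandard tableaux $T$ of shape $\lambda/\mu$. Using the Hall inner product, under which the $s_{\lambda}$ form an orthonormal basis, skewing by $\mu$ is adjoint to multiplication by $s_{\mu}$, which gives the identity
\[
\langle s_{\lambda/\mu},\,s_{\nu}\rangle=\langle s_{\lambda},\,s_{\mu}s_{\nu}\rangle=c^{\lambda}_{\mu,\nu},
\]
so that $c^{\lambda}_{\mu,\nu}$ is precisely the coefficient of $s_{\nu}$ in the Schur expansion of $s_{\lambda/\mu}$. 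Expanding $s_{\lambda/\mu}$ instead in the monomial basis and comparing coefficients, and writing $N_{\beta}$ for the number of semistandard tableaux of shape $\lambda/\mu$ and content $\beta$, this yields the numerical identity
\[
N_{\beta}=\sum_{\nu}c^{\lambda}_{\mu,\nu}\,K_{\nu\beta},
\]
where $K_{\nu\beta}$ is the Kostka number counting semistandard tableaux of shape $\nu$ and content $\beta$.

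The heart of the argument, and the step I expect to be the main obstacle, is to produce a matching identity on the tableau side. Writing $d^{\lambda}_{\mu,\nu}$ for the number of Littlewood--Richardson tableaux described in the statement, I would invoke the theory of jeu de taquin: rectification, the process of sliding a skew tableau to a straight shape, is confluent, so every semistandard $T$ of shape $\lambda/\mu$ has a well-defined rectification $\mathrm{rect}(T)$, a straight-shape tableau of the same content. The fundamental lemma is that the number of $T$ of shape $\lambda/\mu$ with $\mathrm{rect}(T)$ equal to a fixed tableau $U$ depends only on the shape $\nu$ of $U$, not on $U$ itself (equivalently, this is the invariance of Knuth equivalence classes under sliding). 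Taking $U$ to be the canonical Yamanouchi tableau $Y_{\nu}$ of shape $\nu$, the tableaux rectifying to $Y_{\nu}$ are exactly those whose reversed-row reading word is a ballot word, namely the Littlewood--Richardson tableaux, so this common count is $d^{\lambda}_{\mu,\nu}$. Since a fixed shape $\nu$ carries $K_{\nu\beta}$ tableaux of content $\beta$, grouping the tableaux counted by $N_{\beta}$ according to the shape of their rectification gives
\[
N_{\beta}=\sum_{\nu}d^{\lambda}_{\mu,\nu}\,K_{\nu\beta}.
\]

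Finally I would compare the two expansions of $N_{\beta}$. Because the Kostka matrix $(K_{\nu\beta})$ is upper unitriangular with respect to dominance order on partitions, it is invertible, and the two identities force $c^{\lambda}_{\mu,\nu}=d^{\lambda}_{\mu,\nu}$ for every $\nu$, which is exactly the claimed combinatorial interpretation. All of the genuine difficulty is concentrated in establishing that jeu de taquin rectification is well defined and that its fiber counts depend only on the target shape; an alternative route that bypasses the Kostka inversion is to place a crystal structure on semistandard tableaux, decompose the tensor product $B(\mu)\otimes B(\nu)$ into connected components, and identify the highest-weight vertices directly with Littlewood--Richardson tableaux, but in either approach this confluence/invariance statement is the essential content of the theorem.
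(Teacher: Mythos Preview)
Your outline is a correct sketch of the standard proof of the Littlewood--Richardson rule via Schur functions and jeu de taquin rectification. Note, however, that the paper does not actually prove this theorem: it is stated as a classical result, with references to Fomin's appendix to Stanley \cite{Fo} and to Manivel \cite{M}, and then used as a black box in the subsequent arguments. Your proposal is essentially the argument one finds in those references, so there is no divergence in approach to compare; you have simply supplied what the paper deliberately outsourced to the literature. The one point worth flagging is that you correctly identify the confluence of jeu de taquin (equivalently, the shape-only dependence of rectification fibers) as the genuine content of the theorem, and you do not prove it; any complete write-up would have to either carry this out in full or, as the paper does, cite it.
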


For example, we have $c^{(3, 2, 1)}_{(1), (3, 2)} = 1$, because  the only valid Littlewood-Richardson tableau is
\[
\ytableausetup{notabloids}
\begin{ytableau}
\none & 1& 1\\
1&2\\
2
\end{ytableau}
\] 

Note that the row-reversed word of the above tableau is $11212$. It is this combinatorial interpretation of the Littlewood-Richardson coefficients that we subsequently use in our proofs.

For a partition $\lambda$, we will use $s*\lambda$ to denote the partition obtained from $\lambda$ by a stretching factor of $s$. For example, if $\lambda = (3, 2, 1)$ and $s=3$, then $s*\lambda = (9, 6, 3)$.

The Littlewood-Richardson coefficients exhibit a remarkable stretching property: for any triple of partitions, $\lambda, \mu, \nu$, the coefficients $c^{N*\lambda}_{N*\mu, N*\nu}$ are polynomials in the stretching factor $N$; that is, there is a polynomial $P^{\lambda}_{\mu, \nu}$ with rational coefficients such that 
\[c^{N*\lambda}_{N*\mu, N*\nu} = P^{\lambda}_{\mu, \nu}(N).\]

We will refer to any such polynomial as a \emph{Littlewood-Richardson polynomial}. The polynomiality of these stretched Littlewood-Richardson coefficients was first conjectured by King, Tollu and Toumazet~\cite{KTT}, and subsequently established independently by Derksen and Weyman~\cite{DW} and Rassert~\cite{R}. King, Tollu and Toumazet also conjectured that the coefficients of Littlewood-Richardson polynomials $P^{\lambda}_{\mu, \nu}$ are nonnegative, but this remains open. 

We can represent both the $s$-Catalan and spin $s$-Catalan numbers as Littlewood-Richardson polynomials (in $s$), providing an explanation of the polynomiality of the rows in Figure \ref{fig:scattable} and Figure \ref{fig:s2cattable}. To state these theorems, for every positive integer $n$ define the partitions $\lambda_n = (n, n-1, \ldots, 1)$ and $\alpha_n = (2n, 2n-2, \ldots, 2) = 2*\lambda_n$. 

\begin{theorem}\label{thm:scatlrthm}
Let $s$ and $n$ be positive integers. Then, \[c_{s*\lambda_{2n-1}, s*(n, n)}^{s*\lambda_{2n}} = \binom{2n}{sn}_s - \binom{2n}{sn+1}_s.\]
\end{theorem}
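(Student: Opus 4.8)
The plan is to compute the skew shape explicitly, reduce the Littlewood--Richardson coefficient to an occupancy count, and match it with the stated difference of $s$-binomial coefficients. Write $\lambda = s*\lambda_{2n}$, $\mu = s*\lambda_{2n-1}$, and $\nu = s*(n,n) = (sn,sn)$. Then $\mu$ has parts $\mu_i = s(2n-i)$ for $1\le i\le 2n-1$ (with $\mu_{2n}=0$), while $\lambda$ has parts $\lambda_i = s(2n-i+1)$ for $1\le i\le 2n$. Hence row $i$ of the skew shape $\lambda/\mu$ occupies exactly the columns $s(2n-i)+1,\ldots,s(2n-i+1)$, a horizontal block of $s$ cells, and as $i$ varies these column-intervals are pairwise disjoint and together partition $\{1,\ldots,2sn\}$. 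So $\lambda/\mu$ consists of $2n$ rows of width $s$ with \emph{no two cells in a common column}; this spacing property of the stretched staircase is the geometric fact that drives everything.

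Given this shape and content $\nu=(sn,sn)$, every semistandard filling must be weakly increasing along each row and is (vacuously) strictly increasing down each column, so a filling amounts to choosing, for each row $i$, the number $k_i\in\{0,1,\ldots,s\}$ of $1$'s (the remaining $s-k_i$ entries being $2$'s), subject to $\sum_i k_i = sn$. By Freund's Lemma (Lemma~\ref{lem:fdlemma}), the number of such sequences with $\sum_i k_i = m$ is exactly $\binom{2n}{m}_s$. The Littlewood--Richardson coefficient imposes in addition that the reverse reading word be a ballot word, which cuts the count down. The shape of the answer — a difference of two $s$-binomials — is the signature of a reflection argument, and my first instinct is to show the non-ballot sequences of weight $sn$ are equinumerous with all sequences of weight $sn+1$; a reflection most naturally matches the bad sequences with weight-$(sn-1)$ sequences, which the symmetry $\binom{2n}{sn-1}_s=\binom{2n}{sn+1}_s$ then converts.

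The difficulty — and what I expect to be the main obstacle — is that the standard reflection principle does \emph{not} respect the bounded ``block'' structure $0\le k_i\le s$: reflecting the portion of the path before its first descent to $-1$ turns a sorted block $2^{s-k_j}1^{k_j}$ into an unsorted one, producing a word that is no longer a valid filling. I would therefore avoid the reflection and argue algebraically instead. Writing $s_\mu$ for the Schur function, $h_k=s_{(k)}$ for the complete homogeneous symmetric function, and $\langle\,\cdot\,,\,\cdot\,\rangle$ for the Hall inner product (so that $c^{\lambda}_{\mu,\nu}=\langle s_\mu s_\nu, s_\lambda\rangle$), the two-row Jacobi--Trudi identity gives
\[
s_{(sn,sn)} = h_{sn}h_{sn} - h_{sn+1}h_{sn-1},
\]
and hence
\[
c^{\lambda}_{\mu,(sn,sn)} = \langle s_\mu h_{sn}h_{sn}, s_\lambda\rangle - \langle s_\mu h_{sn+1}h_{sn-1}, s_\lambda\rangle.
\]

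By Pieri's rule, $\langle s_\mu h_a h_b, s_\lambda\rangle$ counts the intermediate partitions $\rho$ with $\mu\subseteq\rho\subseteq\lambda$ for which $\rho/\mu$ is a horizontal strip of size $a$ and $\lambda/\rho$ is a horizontal strip of size $b$. Recording such a $\rho$ by the numbers $k_i=\rho_i-\mu_i$, the crucial point is that, because consecutive parts of the staircase $\mu$ differ by exactly $s$, all the inequalities expressing ``$\rho$ is a partition,'' ``$\rho/\mu$ is a horizontal strip,'' and ``$\lambda/\rho$ is a horizontal strip'' collapse to $0\le k_i\le s$ and are otherwise automatic. Thus $\langle s_\mu h_a h_b, s_\lambda\rangle$ equals the number of sequences $(k_1,\ldots,k_{2n})$ with $0\le k_i\le s$ and $\sum_i k_i=a$, which by Freund's Lemma is $\binom{2n}{a}_s$. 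Taking $a=sn$ in the first term and $a=sn+1$ in the second yields $c^{\lambda}_{\mu,(sn,sn)} = \binom{2n}{sn}_s - \binom{2n}{sn+1}_s$, as claimed. The one point needing careful verification is precisely the vacuousness of these Pieri conditions, which is exactly the spacing property of the stretched staircase established in the first step.
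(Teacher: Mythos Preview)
Your argument is correct and takes a genuinely different route from the paper. Both proofs begin with the same geometric observation that the skew shape $s*\lambda_{2n}/s*\lambda_{2n-1}$ is a disjoint union of $2n$ horizontal blocks of width $s$, so that semistandard fillings of content $(sn,sn)$ are parametrized by sequences $(k_1,\ldots,k_{2n})$ with $0\le k_i\le s$ and $\sum k_i=sn$, counted by $\binom{2n}{sn}_s$ via Freund's Lemma. From there the paper stays combinatorial: it exhibits a reflection-type bijection between the fillings whose reading word \emph{fails} the ballot condition and the fillings of content $(sn+1,sn-1)$, swapping $1$'s and $2$'s up to the first violation and then re-sorting each affected row so that the result is again weakly increasing. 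Your concern that the naive reflection would destroy the block structure is exactly right, and the paper's re-sorting step is what repairs it. You instead bypass the bijection entirely: the two-row Jacobi--Trudi identity $s_{(sn,sn)}=h_{sn}^2-h_{sn+1}h_{sn-1}$ reduces the Littlewood--Richardson coefficient to a difference of two Pieri counts, and the stretched-staircase spacing makes each Pieri count collapse to an $s$-binomial coefficient. Your approach is shorter and avoids the delicate involution, at the cost of importing Jacobi--Trudi and Pieri; the paper's approach is self-contained and fully bijective, which better serves its stated aim of giving a combinatorial interpretation of the $s$-Catalan numbers.
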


\begin{theorem}\label{thm:laurentthm}
If $n$ and $s$ are positive integers, or if $s=k/2$ for some odd positive integer $k$ and $n$ is an odd positive integer, 
\[c^{s*\alpha_n}_{s*\alpha_{n-1}, s*(n+1, n-1)} = \binom{n}{sn+s}_{2s} - \binom{n}{sn+s+1}_{2s}.\]
\end{theorem}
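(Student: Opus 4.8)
The plan is to use the combinatorial interpretation of the Littlewood--Richardson coefficients together with Freund's Lemma (Lemma~\ref{lem:fdlemma}). First I would rewrite the indices: since $\alpha_n = 2*\lambda_n$, we have $s*\alpha_n = (2s)*\lambda_n$ and $s*\alpha_{n-1} = (2s)*\lambda_{n-1}$, so the coefficient in question is $c^{(2s)*\lambda_n}_{(2s)*\lambda_{n-1},\,\nu}$ with $\nu = (s(n+1),\, s(n-1))$. The hypothesis on $s$ and $n$ is exactly the condition under which all three of these are genuine integer partitions (equivalently, $2s$ is a positive integer and $s(n\pm 1)$ are integers), so the statement is well posed precisely in the stated range; the argument below is uniform in $m := 2s$.

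The key geometric observation is that the skew shape $(2s)*\lambda_n \,/\, (2s)*\lambda_{n-1}$ is a disjoint union of $n$ rows, each of length $2s$: row $i$ occupies columns $2s(n-i)+1, \ldots, 2s(n-i+1)$, and consecutive rows share no column (they are only diagonally adjacent). Consequently no column of the skew shape contains two cells, the column-strictness requirement in a semistandard filling is vacuous, and an admissible filling is just an independent choice, for each row, of a weakly increasing word $1^{a_i}2^{b_i}$ with $a_i + b_i = 2s$. The ballot (Yamanouchi) condition is what couples these rows, through the reading word $2^{b_1}1^{a_1}\cdots 2^{b_n}1^{a_n}$.

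The crux of the proof is to recognize that this collapses the Littlewood--Richardson coefficient to a two-row Kostka number. Because the skew shape is a disjoint union of horizontal rows of length $2s$, its skew Schur function factors as $s_{\lambda/\mu} = h_{2s}^{\,n}$, whence $c^{(2s)*\lambda_n}_{(2s)*\lambda_{n-1},\nu} = \langle h_{2s}^{\,n}, s_\nu\rangle = K_{\nu,\,(2s^n)}$ is the number of semistandard tableaux of the two-row shape $\nu = (s(n+1),\, s(n-1))$ with content $(2s, 2s, \ldots, 2s)$ ($n$ parts). Equivalently, and staying purely combinatorial, I would exhibit a bijection between the ballot fillings above and such two-row tableaux; this is the step I expect to require the most care, since it is where the awkward block structure of the reading word (each block forced to have the form $2^{*}1^{*}$) is traded for the cleaner column-strictness of a genuine two-row tableau.

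Finally I would evaluate $K_{(p,q),(2s^n)}$ for $(p,q) = (s(n+1),\, s(n-1))$ using the two-row Jacobi--Trudi identity $s_{(p,q)} = h_p h_q - h_{p+1}h_{q-1}$. Extracting the coefficient of $x_1^{2s}\cdots x_n^{2s}$, the term from $h_p h_q$ counts the ways to write $(2s,\ldots,2s) = \gamma + \delta$ with $|\gamma| = p$ and $0 \le \gamma_i \le 2s$, which by Freund's Lemma is exactly $\binom{n}{p}_{2s}$; likewise the $h_{p+1}h_{q-1}$ term contributes $\binom{n}{p+1}_{2s}$. This gives $c = \binom{n}{sn+s}_{2s} - \binom{n}{sn+s+1}_{2s}$, as claimed. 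I would remark that Theorem~\ref{thm:scatlrthm} is the same computation with parameters $(N, L, p) = (2n, s, sn)$ in place of $(n, 2s, s(n+1))$, so that both results are instances of the single identity $K_{(p,q),(L^N)} = \binom{N}{p}_L - \binom{N}{p+1}_L$.
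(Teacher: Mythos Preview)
Your argument is correct and takes a genuinely different route from the paper's. After the common opening observation that the skew shape $(2s)*\lambda_n / (2s)*\lambda_{n-1}$ is a union of $n$ column-disjoint horizontal rows of length $2s$, the paper proceeds purely bijectively: it counts all row-weakly-increasing fillings of content $(s(n+1),s(n-1))$ directly via Freund's Lemma as $\binom{n}{sn+s}_{2s}$, and then removes the non-ballot ones by the classical reflection at the first ballot violation, giving a bijection to fillings of content $(s(n+1)+1,\,s(n-1)-1)$ counted by $\binom{n}{sn+s+1}_{2s}$ (in fact the paper omits these details for Theorem~\ref{thm:laurentthm} and simply refers back to its proof of Theorem~\ref{thm:scatlrthm}). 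You instead pass through symmetric functions: the factorization $s_{\lambda/\mu}=h_{2s}^{\,n}$ converts the LR coefficient into the two-row Kostka number $K_{(s(n+1),\,s(n-1)),\,(2s)^n}$, and Jacobi--Trudi plus coefficient extraction via Freund's Lemma finishes the computation. Your approach is slightly less elementary---it invokes $c^{\lambda}_{\mu,\nu}=\langle s_{\lambda/\mu},s_\nu\rangle$ and the determinantal formula---but it is cleaner (no bijection to set up or verify) and, as you note, it makes transparent that Theorems~\ref{thm:scatlrthm} and~\ref{thm:laurentthm} are both specializations of the single identity $K_{(p,q),(L^N)}=\binom{N}{p}_L-\binom{N}{p+1}_L$. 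One small comment: your aside about exhibiting an explicit bijection between ballot fillings and two-row SSYT is unnecessary once you pass through $\langle h_{2s}^{\,n},s_\nu\rangle$, so you can safely drop it.
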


The proofs of Theorems \ref{thm:scatlrthm} and \ref{thm:laurentthm} both follow by counting the number of Littlewood-Richardson tableaux of the appropriate shape and content, and using Lemma~\ref{lem:fdlemma} to give a bijection to the $s$-Catalan numbers and spin $s$-Catalan numbers. As the reflection principle argument is essentially the same for both, we omit the details for Theorem \ref{thm:laurentthm}, and only give the proof for Theorem \ref{thm:scatlrthm}. 

\begin{proof}[Proof of Theorem \ref{thm:scatlrthm}]
In the skew shape $s*\lambda_{2n} / s*{\lambda_{2n-1}}$, the condition that the filling must be strictly increasing on columns is automatically satisfied, because there are no two cells in the shape which are in the same column. Now it is straightforward to see from Lemma \ref{lem:fdlemma} that the number of possible fillings is $\binom{2n}{sn}_s$, by looking at the placement of the 1s in any such filling. 

We now show that the number of such fillings of $sn$ 1s and $sn$ 2s into the shape $s*\lambda_{2n} / s*{\lambda_{2n-1}}$ which do not satisfy the ballot condition is equal to $\binom{2n}{sn+1}_s$. We do this by exhibiting a bijection to the number of valid fillings (\textit{i.e.} those that weakly increase along rows) of $sn+1$ 1s and $sn-1$ 2s into the shape $s*\lambda_{2n} / s*{\lambda_{2n-1}}$. 

Let $F$ be a filling whose word does not satisfy the ballot condition. There must be a first position $k$ where the number of 2s read in the word exceeds the number of 1s. We form a filling $F'$ with $sn+1$ 1s as follows: replace each 1 in position $1$ through $k$ (inclusive) by a 2, and replace each $2$ by a $1$, adjusting the rows so that the $1$s weakly precede the $2$s. We can reverse this process as follows: Given a filling $F'$ of $sn+1$ 1s into the shape $s*\lambda_{2n} / s*{\lambda_{2n-1}}$, begin reading the word as follows: within each row, read the order of the word in reverse order, that is, reading the 1s before the 2s. Stop reading at the first position $j$ where the 1s exceed the 2s. As before, replace every $1$ in positions $1$ through $j$ with a $2$, and every $2$ with a $1$, adjusting rows as needed to fulfill the weakly increasing condition. 
\end{proof}

\begin{figure}[ht]
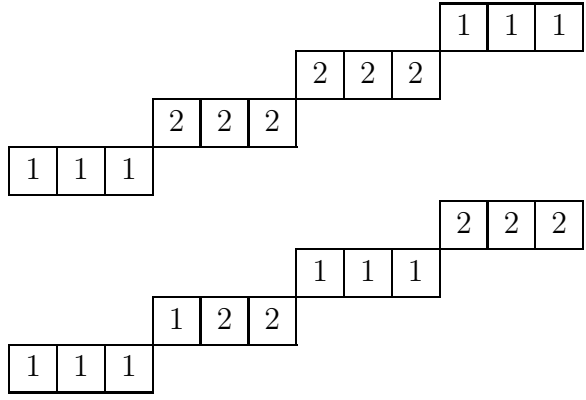

\centering 
\begin{subfigure}[b]{.4\textwidth}
\centering 
\ytableausetup{notabloids}
\begin{ytableau}
\none & \none & \none & \none & \none & \none & \none & \none & \none & 1 & 1 & 1\\
\none & \none & \none &\none &\none &\none & 2 & 2 & 2\\
\none & \none & \none & 2 & 2 & 2\\
1 & 1 & 1
\end{ytableau}
\label{s1}
\end{subfigure}
\centering

\begin{subfigure}[b]{.4\textwidth}
\centering 
\ytableausetup{notabloids}
\begin{ytableau}
\none & \none & \none & \none & \none & \none & \none & \none & \none & 2 & 2 & 2\\
\none & \none & \none &\none &\none &\none & 1 & 1 & 1\\
\none & \none & \none & 1 & 2 & 2\\
1 & 1 & 1
\end{ytableau}
\label{s2}
\end{subfigure}

\caption{An example of the bijection in the proof of Theorem \ref{thm:scatlrthm} with $s=3, n=2$.}
\label{ex:s-Cat}
\end{figure}

Let us consider the relation between the $s$-Catalan and spin $s$-Catalan numbers a little more closely. It is not a coincidence that the odd rows of Figure \ref{fig:s2cattable} match up exactly with the rows of Figure \ref{fig:scattable}. To fix notation, set $n=2m+1$, with $m\ge 0$, and also set $s=t/2$, $t$ a positive integer (this $s$ corresponds to the $s$ for spin $s$-Catalan numbers). Then, $s*\alpha_n = \frac{t}{2}*\alpha_{2m+1} = t*\lambda_{2m+1}$, while the spin $(t/2)$-Catalan number at $n=2m+1$ is $\binom{2m+1}{t(m+1)}_t - \binom{2m+1}{t(m+1) + 1}_t$. Under these notational conventions, the $(2m+1)$th spin $(t/2)$-Catalan number corresponds to the $(m+1)$th $t$-Catalan number, which is the content of the next theorem. 

\begin{theorem}\label{thm:scatspinscatthm}
Let $t$ and $m$ be positive integers. Then, 
\[\binom{2m+1}{t(m+1)}_{t} - \binom{2m+1}{t(m+1)+1}_t = \binom{2m+2}{t(m+1)}_t - \binom{2m+2}{t(m+1)+1}_t,\]
or equivalently,
\[c^{t*\lambda_{2m+1}}_{t*\lambda_{2m}, t*(m+1, m)} = c^{t*\lambda_{2m+2}}_{t*\lambda_{2m+1}, t*(m+1, m+1)}.\]
\end{theorem}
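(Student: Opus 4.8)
The plan is to prove the binomial identity directly and then observe that the Littlewood--Richardson form is equivalent to it. First I would check that the two sides of the first displayed identity are exactly the quantities computed in Theorems~\ref{thm:scatlrthm} and~\ref{thm:laurentthm}. The right-hand side is the $(m+1)$th $t$-Catalan number (Definition~\ref{defn:scat} with $n=m+1$, $s=t$), so Theorem~\ref{thm:scatlrthm} identifies it with $c^{t*\lambda_{2m+2}}_{t*\lambda_{2m+1},\,t*(m+1,m+1)}$. The left-hand side is the $(2m+1)$th spin $(t/2)$-Catalan number (Definition~\ref{s2catnum} with $s=t/2$, $n=2m+1$, since then $2s=t$ and $sn+s=t(m+1)$), so Theorem~\ref{thm:laurentthm} identifies it with $c^{(t/2)*\alpha_{2m+1}}_{(t/2)*\alpha_{2m},\,(t/2)*(2m+2,2m)}$. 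Using $\alpha_k=2*\lambda_k$ we have $(t/2)*\alpha_k=t*\lambda_k$ and $(t/2)*(2m+2,2m)=t*(m+1,m)$, which turns this into $c^{t*\lambda_{2m+1}}_{t*\lambda_{2m},\,t*(m+1,m)}$. This bookkeeping is precisely the equivalence asserted in the theorem, so it remains to prove the binomial identity.

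Write $q(x)=1+x+\cdots+x^t$ and $p(x)=1+x+\cdots+x^{t-1}$, and set $N=t(m+1)$, so that $\binom{n}{k}_t=[x^k]\,q(x)^n$. The claim is $[x^N](q^{2m+1}-q^{2m+2})=[x^{N+1}](q^{2m+1}-q^{2m+2})$. The key algebraic step is the factorization
\[q^{2m+1}-q^{2m+2}=q^{2m+1}(1-q)=-x\,p\,q^{2m+1},\]
since $1-q=-(x+\cdots+x^t)=-x\,p$. Extracting coefficients, the identity collapses to the single assertion
\[[x^{N-1}]\bigl(p\,q^{2m+1}\bigr)=[x^{N}]\bigl(p\,q^{2m+1}\bigr).\]

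Finally I would prove this by palindromy. The polynomial $f:=p\,q^{2m+1}$ has degree $(t-1)+t(2m+1)=2N-1$, and from $p(x)=x^{t-1}p(1/x)$ and $q(x)=x^{t}q(1/x)$ one gets $f(x)=x^{2N-1}f(1/x)$; hence $[x^{j}]f=[x^{2N-1-j}]f$ for every $j$, and taking $j=N-1$ gives exactly $[x^{N-1}]f=[x^{N}]f$. I do not expect a serious obstacle: the whole content is the symmetry $\binom{n}{k}_t=\binom{n}{tn-k}_t$ of the $t$-binomial coefficients, and the only care needed is the degree computation and the index bookkeeping of the first paragraph. A slightly more combinatorial alternative avoids generating functions: applying the $t$-fold Pascal recurrence $\binom{2m+2}{k}_t=\sum_{j=0}^{t}\binom{2m+1}{k-j}_t$ to the right-hand side telescopes it to $\binom{2m+1}{tm}_t-\binom{2m+1}{N+1}_t$, and the same symmetry $\binom{2m+1}{tm}_t=\binom{2m+1}{t(2m+1)-tm}_t=\binom{2m+1}{N}_t$ matches this with the left-hand side.
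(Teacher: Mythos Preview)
Your argument is correct. The bookkeeping in your first paragraph matches the discussion preceding the theorem, and the generating-function computation is sound: the factorization $q^{2m+1}-q^{2m+2}=-x\,p\,q^{2m+1}$ together with the palindromy of $p\,q^{2m+1}$ about degree $N-\tfrac12$ gives exactly the required coefficient equality. Your alternative via the $t$-fold Pascal recurrence and the symmetry $\binom{n}{k}_t=\binom{n}{tn-k}_t$ is equally valid.

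This is, however, a genuinely different route from the paper's proof. The paper works directly on the Littlewood--Richardson side: it observes that in any valid LR filling of $t*\lambda_{2m+2}/t*\lambda_{2m+1}$ with content $t*(m+1,m+1)$, the bottom row of length $t$ is forced to consist entirely of $2$s (a $1$ there would be the last letter read and would violate the ballot condition), and deleting that row is a bijection to LR fillings of $t*\lambda_{2m+1}/t*\lambda_{2m}$ with content $t*(m+1,m)$. Your approach instead proves the $t$-binomial identity by pure symmetry and then invokes Theorems~\ref{thm:scatlrthm} and~\ref{thm:laurentthm} for the equivalence. The paper's bijection is shorter and stays within the tableau model, explaining the identity as ``the last row is forced''; your argument is more elementary in that it never touches tableaux and exposes the identity as an instance of the reflection symmetry of $s$-binomial coefficients, at the cost of depending on Theorem~\ref{thm:laurentthm} (whose proof the paper omits) for the translation back to LR coefficients.
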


Note that if $t=1$ in Theorem \ref{thm:scatspinscatthm}, then the binomial coefficient identity reduces to 
$\binom{2m+1}{m+1} - \binom{2m+1}{m+2} = \binom{2m+2}{m+1} -\binom{2m+2}{m+2}$, and the latter is clearly the $(m+1)$th Catalan number. 

\begin{proof}[Proof of Theorem \ref{thm:scatspinscatthm}]
We show that $c^{t*\lambda_{2m+2}}_{t*\lambda_{2m+1}, t*(m+1, m+1)} = c^{t*\lambda_{2m+1}}_{t*\lambda_{2m}, t*(m+1, m)}$. Any valid filling of a Littlewood-Richardson tableau on the skew shape $t*\lambda_{2m+2} / t*\lambda_{2m+1}$ with $t(m+1)$ 1s and $t(m+1)$ 2s must have a row of $t$ 2s in the bottom row, as any 1s in the bottom row would violate the ballot condition. Deleting the bottom row and the columns above it therefore leaves a valid Littlewood-Richardson tableau on the skew shape $t*\lambda_{2m+1} / t*\lambda_{2m}$ with $t(m+1)$ 1s and $tm$ 2s. This operation is clearly reversible, completing the proof. 
\end{proof}

\section*{Acknowledgements}
I thank Alexander Yong for stimulating my interest in these topics, in particular by asking for a description of the Littlewood-Richardson coefficients $c_{s*\lambda_{2n-1}, s*(n, n)}^{s*\lambda_{2n}}$, and for numerous helpful discussions and a number of suggestions which have improved the overall quality of this manuscript.

\end{document}